\newtheorem{cl}{Claim}[section]
\newtheorem{prop}{Proposition}[section]
\newtheorem{cor}{Corollary}[section]
\newtheorem{thm}{Theorem}[section]
\newtheorem{no}{Note}[section]
\newtheorem{re}{Remark}[section]
\newtheorem*{acknowledgement*}{Acknowledgement}
\newcommand{\vast}{\bBigg@{4}}
\newcommand{\Vast}{\bBigg@{5}}
\begin{document}
\title{Modeling Stochastic Anomalies in an SIS and SIRS System}
\author{Andrew Vlasic \\ Department of Mathematics \\ University of Illinois at Urbana-Champaign}
\date{}

\maketitle

\begin{abstract}
I propose a stochastic SIS and SIRS system to include a Poisson measure term to model anomalies in the dynamics. In particular, the positive integrand in the Poisson term is intended to model quarantine. Conditions are given for the stability of the disease free equilibrium for both systems. 
\end{abstract}

{\bf Keywords:} SIS model; SIRS model; Lyapunov function; Stochastic process; Numerical simulation; Stochastic stability

\section{Introduction}
Many authors have considered continuous time stochastic epidemiological models \cite{RWZ12,TBV05,YJOA12}, however, none of these models taken into account anomalies that affect the system. Examples of these events are super-carriers, the amount of contacts of individuals in the stadium and in the bars during a sports event, quarantine, etc. A particular example is the three waves of influenza pandemic in 1918 \cite{H11}.

\medskip
For a fixed population of size $N$, we consider the following SIS model:
\begin{equation}\begin{split}
S'(t) &  = -\beta S(t) I(t) -\mu S(t) + \mu + \lambda I(t) ,
\\ I'(t) & = \beta S(t) I(t) - (\lambda +\mu) I(t),
\end{split}\end{equation}
and the SIRS model:  
\begin{equation}\begin{split}
S'(t) & = -\beta S(t) I(t) +\delta R(t), 
\\ I'(t) & =\beta S(t) I(t) - \lambda I(t), 
\\ R'(t) & = \lambda  I(t)  - \delta R(t).
\end{split}\end{equation}
Here, $S(t)$, $I(t)$, and $R(t)$, denote the frequencies of the susceptible, infected, and removed respectively, where $S(t)+ I(t) +R(t)=1$. The constant $\mu$ represents the birth and death rate (newborns are assumed to be susceptible), $\lambda$ is the recovery rate for the individuals that are infected, $\beta$ represents the average number of contacts per day, and $\delta$ is the rate for which recovered individuals become susceptible. 

\medskip
For the SIS model, if $\beta > \mu + \lambda$, then an epidemic will occur, and if $\beta \leq \mu + \lambda$ then the process will converge to the disease-free equilibrium, which means that the disease has disappeared. In other words, if $\beta \leq \mu + \lambda$ then point $(1,0)$ is globally asymptotically stable, and if $\beta > \mu + \lambda$, the point $\displaystyle \Big( \frac{ \mu + \lambda}{ \beta }, 1- \frac{ \mu + \lambda}{ \beta } \Big)$ (the endemic equilibrium) is globally asymptotically stable.

\medskip
Considering the SIRS model, when $\beta \leq \lambda$ the disease free equilibrium is globally asymptotically stable. If $\beta > \lambda$ then the endemic equilibrium $\displaystyle \bigg( \frac{ \lambda}{ \beta }, 1- \frac{ \lambda}{ \beta } - \frac{ 1-\frac{\lambda}{\beta}  }{ 1 + \frac{\delta}{\lambda}  } , \frac{ 1-\frac{\lambda}{\beta}  }{ 1 + \frac{\delta}{\lambda}  } \bigg)$ is globally asymptotically stable.

\medskip
We now define and give some intuition to the Poisson measure. Take $X$ to be an $\mathbb{R}$-valued L\'evy process on $(\Omega, \mathcal{F} , P)$ with the filtration $\{ \mathcal{F}_t \}_{ t\in\mathbb{R}_+ }$, where $\mathcal{F}_0$ contains all of the null sets of $\mathcal{F}$. For $B \in \mathcal{B}\Big( \mathbb{R}^d \backslash \{0\} \Big)$ and a fixed $\omega \in \Omega$, define $\Delta X_{\omega}(s) := X_{\omega}(s) - X_{\omega}(s-)$ and $\displaystyle N_{\omega}(t,B) := \# \Big\{0 \leq s \leq t: \Delta X_{\omega}(s) \in B \Big\}$. This random counting measure is known as the \textbf{Poisson measure} since, fixing $B$, the map $t\to N_{\omega}(t,B)$ is a Poisson process with intensity $t\nu(B):=E\Big[N_{\omega}(t,B)\Big]$. We call $\nu(\cdot)$ the \textbf{intensity measure} and it is well known that it is a L\'evy measure, hence, it is a Borel Measure with $\displaystyle \int_{ \mathbb{R}\backslash\{0\} } \Big( |x|^2 \wedge 1 \Big)\nu(dx)<\infty$. The measure $\tilde{N}_{\omega}(dt,dx):=N_{\omega}(dt,dx)-dt\nu(dx)$ is called the \textbf{compensated Poisson measure} \Big(where $dt$ is the Lebesgue measure\Big). For $B \in \mathcal{B}\Big( \mathbb{R}^d \backslash \{0\} \Big)$ where $0\not\in \overline{B}$, one can show $\tilde{N}_{\omega}(t,B)$ is a martingale and $E \Big[ \tilde{N}_{\omega}(t, B ) \Big]=0$, (which stems from $\nu(B)<\infty$).  See Sato \cite{S99}, Bertoin \cite{B96}, or Applebaum \cite{DA04} for further information.

\medskip
We are ready to stochastically perturb the systems above. Define $W(t)$ as a standard Brownian motion and $\sigma^2$ as the variance of the contacts. Furthermore, take $N(dt,dy)$ as a Poisson measure independent of $W(t)$ and $\nu(\cdot)$ as the intensity measure. We assume that $\nu$ is a L\'evy measure and $\nu(\mathbb{R})<\infty$. Lastly, take $h(y)$ as the affect of random jumps in the population, where $-1<h(y)<1$ for every $y \in \mathbb{R}$, and $h(y)$ is continuously differentiable. For the SIS model, Equation (1) becomes the stochastic differential equation
\begin{equation}\begin{split}
dS(t) & = \bigg(-\beta S(t) I(t) -\mu S(t) + \mu + \lambda I(t)  \bigg)dt -\sigma  S(t) I(t) dW(t)  + \int_{ \mathbb{R} } h(y) S(t-) I(t-) N(dt,dy), 
\\ dI(t) & = \bigg( \beta S(t) I(t) - (\lambda +\mu) I(t)  \bigg)dt + \sigma S(t) I(t) dW(t)  - \int_{ \mathbb{R} } h(y) S(t-) I(t-) N(dt,dy).
\end{split}\end{equation}

\medskip
For the SIRS dynamic, since the anomaly term captures the effects of quarantine, we will take a function similar to $h(y)$, call it $j(y)$, with the same assumptions, except that $j(y)$ is assumed to be nonnegative. Equation (2) is now a stochastic differential equation of the form
\begin{equation}\begin{split}
dS(t) & = \Big( -\beta S(t) I(t) +\delta R(t) \Big)dt - \sigma  S(t) I(t) dW(t)  , 
\\ dI(t) & = \bigg( \beta S(t) I(t) - \lambda I(t)  \bigg)dt + \sigma S(t) I(t) dW(t)  -  \int_{ \mathbb{R} } j(y) I(t-) N(dt,dy), 
\\ dR(t) & = \Big( \lambda  I(t)  - \delta R(t) \Big) dt  +  \int_{ \mathbb{R} } j(y) I(t-) N(dt,dy). 
\end{split}\end{equation}

\medskip
In this paper, we will show that Equation (3) and Equation (4) are well-defined, and we give conditions for stability of the disease free equilibrium for both dynamics.

\medskip
Given that the initial condition is in the interior of the simplex, we will show that for all finite time, Equations (3) and (4) are almost surely in the simplex. We will only show this property for the stochastic SIS, since the stochastic SIRS model may be shown in a similar manner. Define $K(t)=\big( S(t),I(t) \big)$, $\Delta_2 = \Big\{ \mathbf{y} \in \mathbb{R}^2 : x_1,x_2>0 \ \mbox{and} \ x_1 + x_2 =1  \Big\}$, and $[ \cdot, \cdot]$ as quadratic variation. For simplicity, we will define $S_c(t)$ and $I_c(t)$ as the continuous part of the process. 

\bigskip
\bigskip
\begin{prop}
For all finite $t$, given that $\mathbf{x} \in \Delta_2$, $P_{ \mathbf{x} } \Big( K (t)\in \Delta_2 \Big)=1$.
\end{prop}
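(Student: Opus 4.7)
The plan is to split the argument into (i) conservation of total mass and (ii) positivity of each coordinate. First, adding the two SDEs in Equation (3), the Brownian increments $\pm\sigma SI\,dW$ cancel, the Poisson integrands $\pm h(y)SI$ cancel inside the common $N(dt,dy)$, and the drift collapses to $\mu(1-(S+I))\,dt$. Since $S(0)+I(0)=1$, this pathwise ODE forces $S(t)+I(t)\equiv 1$ almost surely, so it suffices to show $S(t)>0$ and $I(t)>0$ for every finite $t$.

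Since $\nu(\mathbb{R})<\infty$, $N$ is a compound Poisson random measure, so on any interval $[0,t]$ there are almost surely only finitely many jump times $0<T_1<\cdots<T_n\le t$ with marks $Y_1,\dots,Y_n$. I would first dispose of the jumps: at $T_k$ one has $S(T_k)=S(T_k-)(1+h(Y_k)I(T_k-))$ and $I(T_k)=I(T_k-)(1-h(Y_k)S(T_k-))$. Under the inductive hypothesis $(S(T_k-),I(T_k-))\in\Delta_2$, the bounds $-1<h<1$ together with $S(T_k-),I(T_k-)\in(0,1)$ yield $1+h(Y_k)I(T_k-)>1-I(T_k-)=S(T_k-)>0$ and $1-h(Y_k)S(T_k-)>1-S(T_k-)=I(T_k-)>0$, so every jump preserves the open simplex.

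It remains to argue that between consecutive jump times the continuous SDE keeps the path in the interior. For this I would use the Lyapunov function $V(s,i)=-\log s-\log i$ with the localizing sequence $\tau_m=\inf\{t:S(t)\wedge I(t)\le 1/m\}$. Applying It\^o's formula to $V(S_c,I_c)$ and exploiting $S+I=1$, the generator simplifies to $LV(s,i)=\beta i+2\mu+\lambda-\beta s-(\mu+\lambda i)/s+\sigma^2(s^2+i^2)/2$, which is bounded above by a constant $C$ on $\Delta_2$ because the singular term $-(\mu+\lambda i)/s$ is nonpositive and the remaining terms are uniformly bounded on the simplex. Then $E[V(K(t\wedge\tau_m))]\le V(\mathbf{x})+Ct$, and since $V(K(\tau_m))\ge\log m$ on $\{\tau_m<\infty\}$, Markov's inequality gives $P(\tau_m\le t)\le(V(\mathbf{x})+Ct)/\log m\to 0$ as $m\to\infty$. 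Thus the continuous trajectory never reaches the boundary on any interjump interval, and induction over $T_1,\dots,T_n$ yields $K(t)\in\Delta_2$ for all finite $t$ almost surely.

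The hard part is verifying the uniform upper bound on $LV$; the simplex constraint $S+I=1$ is exactly what makes the singular $-(\mu+\lambda I)/S$ contribution work in our favor rather than against us. A secondary technical point is the use of $\nu(\mathbb{R})<\infty$: the finite-activity hypothesis is essential for reducing the jump component to a finite sum on any bounded time window, which in turn justifies the clean pathwise decomposition into continuous arcs punctuated by multiplicative jumps whose positivity is checked by direct algebra.
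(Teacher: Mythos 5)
Your proof is correct, and the positivity half takes a genuinely different route from the paper's. The paper also begins by summing the two equations to get conservation of mass, but for interiority it transforms the process via $\Psi(\mathbf{x})=\log(x_2/x_1)$ into a real-valued process $Z(t)$ and invokes the Meyn--Tweedie non-explosion criterion with the Lyapunov function $V(x)=1+x^2$, keeping the jump term inside the generator and bounding the logarithmic jump increments there. You instead stay on the simplex, exploit $\nu(\mathbb{R})<\infty$ to decompose the path into finitely many continuous arcs separated by jumps, verify by direct algebra that each multiplicative jump factor $1+h(Y_k)I(T_k-)>S(T_k-)>0$ and $1-h(Y_k)S(T_k-)>I(T_k-)>0$ preserves the open simplex, and then run a standard localization argument with the barrier $V(s,i)=-\log s-\log i$, whose diffusion generator you correctly compute and bound by a constant on $\Delta_2$ (the only singular term $-(\mu+\lambda i)/s$ has the right sign, and there is no $1/i$ singularity because the $I$-drift vanishes linearly at $i=0$). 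What your decomposition buys is that you never have to control the jump contribution $\log\frac{1-h(y)S}{1+h(y)I}$ to the generator, which degenerates near the boundary and is the delicate point in the paper's uniform bound; what it costs is an essential reliance on finite activity, whereas a single-generator argument is in principle more uniform. Two minor points you should make explicit if you write this up: the restart of the diffusion argument at each jump time $T_k$ uses the strong Markov property together with the independence of $N$ and $W$, and your conclusion $U\equiv 1$ from $dU=\mu(1-U)\,dt$ with $U(0)=1$ should cite pathwise uniqueness for that linear ODE (this is actually cleaner than the paper's phrasing, which only checks $dU=0$ on the simplex).
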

\begin{proof}
Consider the mapping on the simplex $G( \mathbf{x} )= x_1+ x_2$, and define $U(t)= G\big( L(t) \big)$. It\^o's lemma yields
\begin{equation*}\begin{split}
dU(t) &  = \frac{ \partial G}{ \partial y_1}\big( L(t) \big) dS_c(t) +\frac{1}{2} \frac{ \partial G}{ \partial y_2}\big( L(t) \big) dI_c(t)  + \frac{1}{2}  \frac{ \partial^2 G}{ \partial y_1^2 }\big( L(t) \big) \big[ dS_c(t), dS_c(t) \big]
\\ &  + \frac{1}{2}  \frac{ \partial^2 G}{ \partial y_2^2 }\big( L(t) \big) \big[ dI_c(t), dI_c(t) \big] + \frac{1}{2}  \frac{ \partial^2 G}{ \partial y_1 \partial y_2 }\big( L(t) \big) \big[ dS_c(t), dI_c(t) \big]  + \frac{1}{2}  \frac{ \partial^2 G}{ \partial y_2 \partial y_1 } \big( L(t) \big) \big[ dI_c(t), dS_c(t) \big]
\\ & + \int_{ \mathbb{R} } \bigg[ G\Big( L(t) + \big( h(y) S(t-) I(t-), -  h(y) S(t-) I(t-) \big) \Big) - G\big(  L(t)  \big) \bigg] N(dt,dy)
\\ & = \bigg(-\beta S(t) I(t) -\mu S(t) + \mu + \lambda I(t)  \bigg)dt -\sigma  S(t) I(t) dW(t) + \bigg( \beta S(t) I(t) - (\lambda +\mu) I(t)  \bigg)dt + \sigma S(t) I(t) dW(t) 
\\ & + \int_{ \mathbb{R} } \bigg[ \Big( S(t) + h(y) S(t-) I(t-) + I(t) -  h(y) S(t-) I(t-) \Big) - \Big(  S(t) + I(t) \Big) \bigg] N(dt,dy)
\\ & = \bigg(\ -\mu \Big( S(t) + I(t) \Big) + \mu  \bigg)dt.
\end{split}\end{equation*}
Thus, if the process is in $\Delta_2$, then $\displaystyle dU(t) = \bigg(\ -\mu \Big( S(t) + I(t) \Big) + \mu  \bigg)dt = \big( -\mu + \mu)dt =0$. Therefore, if $\displaystyle L(t) \in \Delta_2$  then $U(t)=1$.

\medskip
Finally, we show that $L(t)$ does not hit or jump over the boundary in finite time. Define $\Psi(\mathbf{x}) = \log( x_2 / x_1 )$, $\tau$ as the first time $L(t)$ leaves the open simplex (i.e., such that $I( \tau ) \leq 0$  or $S( \tau ) \leq 0$ ), and $Z(t) := \Psi( L(t) )$ for $t<\tau$. We will apply Theorem 2.1 in Meyn and Tweedie \cite{MT93} to the process $Z(t)$ in order to show this $Z(t)$ does not explode in finite time, and thus $P_{ \mathbf{x} } \big( \tau = \infty \big)=1$. By It\^o's lemma we have  
 \begin{equation*}\begin{split}
 dZ(t) &  =  \frac{ \partial \Psi }{ \partial x_1}\big( L(t) \big) dS_c(t) + \frac{ \partial \Psi }{ \partial x_2}\big( L(t) \big) dI_c(t)  + \frac{1}{2}  \frac{ \partial^2 \Psi }{ \partial x_1^2 }\big( R(t) \big) \big[ dS_c(t), dS_c(t) \big]
\\  + & \frac{1}{2}  \frac{ \partial^2 \Psi }{ \partial x_2^2 }\big( L(t) \big) \big[ dI_c(t), dI_c(t) \big] + \frac{1}{2} \frac{ \partial^2 \Psi }{ \partial x_1 \partial x_2 }\big( L(t) \big) \big[ dS_c(t), dI_c(t) \big] +  \frac{1}{2}  \frac{ \partial^2 \Psi }{ \partial x_2 \partial x_1 } \big( L(t) \big) \big[ dI_c(t), dS_c(t) \big]
\\  + & \int_{ \mathbb{R} } \bigg[ \Psi \Big( L(t) + \big( h(y) S(t-) I(t-), -  h(y) S(t-) I(t-) \big) \Big) -  \Psi \big(  L(t)  \big) \bigg] N(dt,dy)
\end{split}\end{equation*}
 \begin{equation*}\begin{split}
\\  \hspace{50pt} & =  \frac{-1}{ S(t) }\bigg( \bigg(-\beta S(t) I(t) -\mu S(t) + \mu + \lambda I(t)  \bigg)dt -\sigma  S(t) I(t) dW(t) \bigg)
\\ & + \frac{1}{ I(t) }\bigg(  \bigg( \beta S(t) I(t) - (\lambda +\mu) I(t)  \bigg)dt + \sigma S(t) I(t) dW(t)  \bigg)
\\ & +  \frac{ \sigma^2  }{ 2 S^2(t) } S^2(t) I^2(t) dt +  \frac{ -\sigma^2 }{ 2 I^2(t) } S^2(t) I^2(t) dt
\\ & + \int_{ \mathbb{R} } \bigg[ \log\bigg( \frac{ I(t) -  h(y) S(t-) I(t-) }{ S(t) + h(y) S(t-) I(t-)} \bigg) -  \log\big( I(t)/S(t)  \big) \bigg] N(dt,dy)
\\ & = \bigg( \beta I(t) + \mu + \frac{-\mu}{ S(t) } - \lambda \frac{ I(t) }{ S(t) }  \bigg)dt + \sigma I(t) dW(t) +  \bigg( \beta S(t) - (\lambda +\mu) \bigg)dt + \sigma S(t) dW(t) 
\\ & +  \frac{ \sigma^2  }{ 2 } I^2(t) dt +  \frac{ -\sigma^2 }{ 2  } S^2(t) dt + \int_{ \mathbb{R} } \log\bigg( \frac{ 1 -  h(y) S(t-) }{ 1 + h(y) I(t-)} \bigg)N(dt,dy)
\\ & = \bigg( \beta \Psi^{-1}_1\big( Z(t) \big) + \mu + \frac{-\mu}{ \Psi^{-1}_2 \big( Z(t) \big) } - \lambda e^{ Z(t) } + \beta \Psi^{-1}_2 \big( Z(t) \big) - (\lambda +\mu)  + \frac{ \sigma^2 }{ 2 } \Psi^{-1}_1\big( Z(t) \big)^2 +  \frac{ -\sigma^2 }{ 2  } \Psi^{-1}_2\big( Z(t) \big)^2  \bigg)dt 
\\ & + \sigma  \bigg( \Psi^{-1}_1\big( Z(t) \big) + \Psi^{-1}_2\big( Z(t) \big)  \bigg) dW(t)   + \int_{ \mathbb{R} } \log\bigg( \frac{ 1 -  h(y) \Psi^{-1}_2\big( Z(t) \big) }{ 1 + h(y) \Psi^{-1}_1\big( Z(t) \big) } \bigg)N(dt,dy),
\end{split}\end{equation*}
where $\displaystyle \Psi^{-1}\big( x \big) = \frac{1}{ 1+e^x }\big(  1, e^x \big) = \Big( \Psi_1^{-1}\big( y \big) , \Psi_2^{-1}\big( y \big)  \Big)$.

\medskip Now, defining $\frak{B}$ as the infinitesimal generator for $Z(t)$ and $V(x)=1+x^2$, we see that  
\begin{equation*}\begin{split}
\frak{B} V(x) & = \bigg( \beta \Psi^{-1}_1\big( x \big) + \mu + \frac{-\mu}{ \Psi^{-1}_2 \big( x \big) } - \lambda e^{ x } + \beta \Psi^{-1}_2 \big( x \big) - (\lambda +\mu)  + \frac{ \sigma^2 }{ 2 } \Psi^{-1}_1\big( x \big)^2 +  \frac{ -\sigma^2 }{ 2  } \Psi^{-1}_2\big( x \big)^2  \bigg) 2 x
\\& + \sigma^2  \bigg( \Psi^{-1}_1\big( x \big) + \Psi^{-1}_2\big( x \big)  \bigg)^2  + \int_{ \mathbb{R} } \Bigg[ \bigg( x + \log\bigg( \frac{ 1 -  h(y) \Psi^{-1}_2\big( x \big) }{ 1 + h(y) \Psi^{-1}_1\big( x \big) } \bigg) \bigg)^2 - x^2 \Bigg] \nu(dy)
\\ & = \Bigg[ \sigma^2  \bigg( \Psi^{-1}_1\big( x \big) + \Psi^{-1}_2\big( x \big)  \bigg)^2 + \int_{ \mathbb{R} } \log\bigg( \frac{ 1 -  h(y) \Psi^{-1}_2\big( x \big) }{ 1 + h(y) \Psi^{-1}_1\big( x \big) } \bigg)^2  \nu(dy) \Bigg]
\\ &  + \bigg(  2\beta \Psi^{-1}_1\big( x \big) + 2\mu + \frac{-2\mu}{ \Psi^{-1}_2 \big( x \big) } - 2\lambda e^{ x } + 2\beta \Psi^{-1}_2 \big( x \big) - 2(\lambda +\mu)  + \sigma^2 \Psi^{-1}_1\big( x \big)^2 
\\ & \ \ \ \ \ \   -\sigma^2  \Psi^{-1}_2\big( x \big)^2  + 2 \int_{ \mathbb{R} } \log\bigg( \frac{ 1 -  h(y) \Psi^{-1}_2\big( x \big) }{ 1 + h(y) \Psi^{-1}_1\big( x \big) } \bigg)  \nu(dy)     \bigg) x.
\end{split}\end{equation*}
Noting that $\big| \Psi^{-1}_1\big( x \big) \big| \leq 1 $, $\big| \Psi^{-1}_2\big( x \big) \big| \leq 1$, and $\displaystyle \log\bigg( \frac{ 1 -  h(y) \Psi^{-1}_2\big( x \big) }{ 1 + h(y) \Psi^{-1}_1\big( x \big) } \bigg) \leq \log\bigg( \frac{ 1 -  \min h(y) }{ 1 + \min h(y) } \bigg)$, one can see that there exists positive constants $K$ such that $\displaystyle \frak{B} V(x) \leq K V(x)$. Therefore $P_{ \mathbf{x} } \big( \tau = \infty \big)=1$.

\end{proof}

%SIS
\bigskip
\bigskip
\section{Analysis of the Approximated SIS Model}
Since $I(t)=1-S(t)$ we are able to just focus on $S(t)$, which we are able to rewrite as
\begin{equation}\begin{split}
dS(t) & = \bigg(-\beta S(t) \big( 1-S(t) \big)  -\mu S(t) + \mu + \lambda \big( 1-S(t) \big)  \bigg)dt -\sigma S(t) \big( 1- S(t) \big) dW(t) 
\\ & + \int_{A} h(y) S(t-)\big( 1-S(t-) \big) N(dt,dy) 
\\ & := \alpha( S(t) ) dt + \gamma( S(t) ) d W(t) + \int_{ \mathbb{R} } h(y) S(t-)\big( 1-S(t-) \big) N(dt,dy).
\end{split}\end{equation}
Define  $\displaystyle \tau_{\epsilon}=\inf \Big\{ t \geq 0 : S(t) \geq 1 - \epsilon \Big\}$.

\bigskip
\bigskip
\begin{cl}
For all $0<x<1$, we have $E_x \Big[ \tau_{\epsilon} \Big]< \infty$.
\end{cl}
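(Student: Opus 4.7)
The plan is to apply the Lyapunov--Dynkin method to the one-dimensional jump diffusion $S(t)$. Let $\mathcal{L}$ denote its infinitesimal generator, acting on a $C^2$ test function $V$ by
\begin{equation*}
\mathcal{L}V(x)=\alpha(x)V'(x)+\tfrac{1}{2}\sigma^2 x^2(1-x)^2\,V''(x)+\int_{\mathbb{R}}\!\bigl[V\bigl(x+h(y)x(1-x)\bigr)-V(x)\bigr]\nu(dy).
\end{equation*}
The goal is to exhibit a bounded $C^2$ function $V:[0,1-\epsilon]\to\mathbb{R}$ for which $\mathcal{L}V(x)\geq c>0$ on $(0,1-\epsilon)$. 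Granting this, Dynkin's formula applied to the stopped process $S(t\wedge\tau_\epsilon)$, which remains in the compact set $[0,1-\epsilon]$ for $t<\tau_\epsilon$, gives $E_x[V(S(t\wedge\tau_\epsilon))]-V(x)\geq c\,E_x[t\wedge\tau_\epsilon]$, and the boundedness of $V$ together with monotone convergence as $t\to\infty$ yields $E_x[\tau_\epsilon]\leq(\sup V-V(x))/c<\infty$.

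The natural first candidate is $V(x)=-\log(1-x)$, which is bounded by $-\log\epsilon$ on $[0,1-\epsilon]$ and is designed so that the singularity of $V'(x)=1/(1-x)$ at $x=1$ is exactly compensated by the factorisation $\alpha(x)=(1-x)(\mu+\lambda-\beta x)$. Using $V\bigl(x+h(y)x(1-x)\bigr)-V(x)=-\log(1-h(y)x)$, a direct calculation produces
\begin{equation*}
\mathcal{L}V(x)=(\mu+\lambda-\beta x)+\tfrac{1}{2}\sigma^2 x^2-\int_{\mathbb{R}}\log\bigl(1-h(y)x\bigr)\,\nu(dy),
\end{equation*}
which is a bounded continuous function of $x$ on $[0,1-\epsilon]$ (the integral is finite because $\nu(\mathbb{R})<\infty$ and $|h(y)x|$ stays bounded away from $1$ on the relevant range).

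The main obstacle is producing a uniform strictly positive lower bound on $\mathcal{L}V$ over $(0,1-\epsilon)$. Near $x=0$ the generator reduces to $\mu+\lambda>0$, so continuity gives positivity on some right neighbourhood of $0$; but near $x=1-\epsilon$ the drift piece $\mu+\lambda-\beta x$ can be negative when $\beta>\mu+\lambda$, and the jump integral has indefinite sign. To handle this one augments $V$ by an auxiliary strictly convex and increasing term $\kappa\varphi(x)$ whose generator $\mathcal{L}\varphi$ is bounded below by a positive constant on $[0,1-\epsilon]$, and chooses $\kappa$ large enough that $\mathcal{L}(V+\kappa\varphi)\geq c>0$ uniformly on $(0,1-\epsilon)$ while the augmented function is still bounded on $[0,1-\epsilon]$. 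The hard part is engineering the pair $(\varphi,\kappa)$ so that the positive contribution of $\tfrac{1}{2}\sigma^2 x^2(1-x)^2\varphi''(x)$ genuinely dominates on the interior subinterval $[\delta,1-\epsilon]$ without being cancelled by an adverse Poisson-jump contribution; once that balance is arranged the Dynkin estimate above closes the proof.
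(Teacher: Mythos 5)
Your framework is the right one and is in fact the same as the paper's (which follows Imhof's Theorem 4.2): exhibit a bounded Lyapunov function whose generator has a uniform strict sign on the region the stopped process occupies, then apply Dynkin's formula and let $t\to\infty$. Your computation of $\mathcal{L}V$ for the first candidate $V(x)=-\log(1-x)$ is correct, and you correctly diagnose why it fails when $\beta>\mu+\lambda$. But the proof as written has a genuine gap: the entire mathematical content of the claim is the construction of a function with $\mathcal{L}V\geq c>0$ uniformly on the relevant interval, and you stop exactly there, declaring the construction of the auxiliary pair $(\varphi,\kappa)$ to be ``the hard part'' without carrying it out. You have reduced the claim to an unproved existence statement about $\varphi$; until $\varphi$ is exhibited and the competition between the convexity term, the adverse drift $\mu+\lambda-\beta x$, and the indefinite jump integral is actually settled, nothing is proved.

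The paper closes precisely this gap with the explicit choice $f(x)=e^{\gamma}-e^{\gamma x}$ (in your sign convention, $V(x)=e^{\gamma x}$) with $\gamma$ large, and the reason this works is quantitative: the convexity contribution $\tfrac12\sigma^2x^2(1-x)^2V''(x)$ is of order $\gamma^2e^{\gamma x}$ while the drift and jump contributions are of order $\gamma e^{\gamma x}$, so after factoring out $\gamma(1-x)e^{\gamma x}$ one is left with $(\mu+\lambda-\beta x)+\tfrac{\gamma}{2}\sigma^2x^2(1-x)$ plus a controlled jump remainder; for $x$ bounded away from $0$ the term $\tfrac{\gamma}{2}\sigma^2x^2(1-x)$ dominates for large $\gamma$, while near $x=0$, where the diffusion degenerates and convexity buys nothing, the drift $\alpha(0)=\mu+\lambda>0$ already has the right sign and the jump amplitude $h(y)x(1-x)$ vanishes. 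This is exactly the two-regime patching you allude to, but it must be done explicitly, including an estimate of the jump integral (the paper uses $e^{u}\geq 1+u$ and the series for $e^{\gamma x}-1-\gamma x$) to confirm it is dominated. A secondary omission: since the process jumps, $S(\tau_{\epsilon})$ may overshoot $1-\epsilon$, so boundedness of $V$ on $[0,1-\epsilon]$ is not enough for your final estimate $E_x[V(S(t\wedge\tau_{\epsilon}))]\leq\sup V$; you need $V$ bounded on the reachable set. A jump from $x<1-\epsilon$ lands at $x+h(y)x(1-x)<1-(1-x)^2<1-\epsilon^2$, so it suffices to work on $[0,1-\epsilon^2]$ --- the adjustment the paper makes explicitly and which your argument must also record.
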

\begin{proof}
We will follow the proof of Theorem 4.2 given in Imhof \cite{I05}. Take $L$ as the infinitesimal generator for our process $S(t)$ and define $f(y) = e^{\gamma} - e^{ \gamma x}$, where $ \gamma > 0$. Now fix an arbitrarily small $\epsilon>0$. By Dynkin's formula we have that
$$
E_{x_0} \Big[ S\Big(\tau_{\epsilon} \wedge T \Big) \Big] = e^{\gamma} - E_{x_0} \Bigg[ \int _{0}^{ \tau_{\epsilon} \wedge T }Lf \Big( S(t) \Big)dt \Bigg]. 
$$

\medskip
We will now determine an appropriate upper bound  for $Lf \Big( S(t) \Big)$. To adjust for the possibility of the process jumping out of the interval, we will consider a $x\in [0, 1-\epsilon^2]$. Hence
\begin{equation*}\begin{split}
Lf(x) & = -\gamma \Big( -\beta x(1-x) -\mu x + \mu + \lambda \big( 1-x \big) \Big)e^{ \gamma x} - \frac{ \gamma^2 \sigma^2 x^2 \big( 1-x \big)^2}{2}e^{ \gamma x}
\\ & + \int_{ \mathbb{R} } \bigg[ e^{ \gamma x} - e^{ \gamma  \big( x + h(y) x \big( 1-x \big) \big)}  \bigg]\nu(dy)
\\ & = \gamma \bigg( \beta x -\mu  - \lambda  - \frac{\gamma \sigma^2 x^2 \big( 1-x \big) }{2}   \bigg) \big( 1-x \big) e^{ \gamma x}+ \int_{ \mathbb{R} } \bigg[ e^{ \gamma x} - e^{\gamma  \big( x + h(y) x \big( 1-x \big) \big)}  \bigg]\nu(dy)
\end{split}\end{equation*}
Noticing that $ x + h(y) x \big( 1-x \big)$ is positive and recalling the inequality $-e^x \leq -1- x$ for $x>0$, we find that
\begin{equation*}\begin{split}
\int_{ \mathbb{R} } & \bigg[ e^{ \gamma x} - e^{ \gamma  \big( x + h(y) x \big( 1-x \big) \big)}  \bigg]\nu(dy)  \leq \int_{ \mathbb{R} } \bigg[ e^{ \gamma x} - 1 -\gamma  \Big( x + h(y) x \big( 1-x \big) \Big)  \bigg]\nu(dy) 
\\ & \leq \int_{ \mathbb{R} } \bigg[ \sum_{i=2}^{\infty} \frac{\big( \gamma x\big)^i}{i!} -\gamma h(y) x \big( 1-x \big)  \bigg]\nu(dy)   = \gamma  e^{ \gamma x} \int_{ \mathbb{R} } \bigg[ x \sum_{i=2}^{\infty} \frac{\big( \gamma x \big)^{i-1} }{ e^{ \gamma x} i! } - \frac{ h(y) x \big( 1-x \big)  }{ e^{ \gamma x} }  \bigg]\nu(dy) 
\\ & \leq \gamma \big( 1-x \big) e^{\gamma x} \int_{ \mathbb{R} } \bigg[  \frac{1}{ \big( 1-x \big) } - \frac{ h(y) x}{ e^{\gamma x} }  \bigg]\nu(dy).
\end{split}\end{equation*}
Hence
\begin{equation*}\begin{split}
Lf(x) & \leq \gamma \Bigg( \beta x + \frac{ \nu( \mathbb{R} ) }{ \big( 1-x \big) } - \frac{ x}{ e^{ \gamma x} } \int_{ \mathbb{R} } h(y) \nu(dy) - \frac{ \gamma \sigma^2 x^2 \big( 1-x \big) }{2}  - \mu  - \lambda \Bigg) \big(1-x \big) e^{ \gamma x}
\\ & = \gamma \Bigg( \beta x + \frac{ \nu( \mathbb{R} ) }{ \big( 1-x \big) } - \frac{ x }{ e^{ \gamma x} } \int_{ \mathbb{R} } h(y) \nu(dy) - \frac{\gamma \sigma^2 x^2 \big( 1-x \big) }{2} \Bigg) \big( 1-x \big) e^{ \gamma x}  - \gamma \big(  \mu  + \lambda \big) \big( 1-x \big) e^{ \gamma x}.
\end{split}\end{equation*}
Finally, taking $\gamma$ large enough so that $\displaystyle \beta x + \frac{ \nu( \mathbb{R} ) }{ \big( 1-x \big) } - \frac{ x }{ e^{ \gamma x} } \int_{ \mathbb{R} } h(y) \nu(dy) - \frac{\gamma \sigma^2 x^2 \big(1-x \big) }{2}<0$ for all  $x \in [0, 1-\epsilon^2]$ yields
$$
Lf(x) \leq - \gamma \Big(  \mu  + \lambda \Big) \big( 1-x \big)e^{\gamma x}.
$$
Thus
$$
0 \leq E_{y_0} \Big[ S\Big(\tau_{\epsilon} \wedge T \Big) \Big] \leq e^{\gamma} - \epsilon^2 \gamma \Big(  \mu  + \lambda \Big)  E_{y_0} \Big[ \tau_{\epsilon} \wedge T \Big],
$$
and therefore taking $T\to\infty$, the bounded convergence theorem yields $\displaystyle E_{x_0} \Big[ \tau_{\epsilon} \Big]<\infty$.
\end{proof}

\bigskip
\bigskip
\begin{thm}
Suppose that $\displaystyle \int_{ \mathbb{R} } h(y) \nu(dy)<0$. If $\beta < \mu + \lambda + \int_{ \mathbb{R} } h(y) \nu(dy)$, then for $\mathbf{x} \in \Delta_2$,
$$
P_{ \mathbf{x}  } \bigg( \lim_{t \to \infty} K(t) = (1,0) \bigg)=1.
$$
\end{thm}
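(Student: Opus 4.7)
Since Proposition 1.1 guarantees $S(t)+I(t)=1$ almost surely on $\Delta_2$, it suffices to show $I(t)\to 0$ almost surely; then $S(t)=1-I(t)\to 1$ automatically and $K(t)\to(1,0)$. The plan is to use $V(t):=\log I(t)$ as a Lyapunov function (well defined for all finite $t$ since Proposition 1.1 ensures $I(t)>0$), show that its drift is dominated by a strictly negative constant, and then apply a strong law of large numbers for local martingales to conclude $\log I(t)\to -\infty$ almost surely.

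First I would apply It\^o's formula for semimartingales with jumps to $V(t)$, using the second component of Equation (3), and rewrite the Poisson integral via the compensated measure $\tilde N(dt,dy)=N(dt,dy)-\nu(dy)\,dt$. This yields
\begin{equation*}
d\log I(t)=\Big[\beta S(t)-(\lambda+\mu)-\tfrac{1}{2}\sigma^2 S^2(t)+\int_{\mathbb{R}}\log\big(1-h(y)S(t)\big)\nu(dy)\Big]dt+\sigma S(t)\,dW(t)+\int_{\mathbb{R}}\log\big(1-h(y)S(t-)\big)\tilde N(dt,dy).
\end{equation*}
The central estimate is the elementary inequality $\log(1-u)\le -u$, valid whenever $u<1$; applied with $u=h(y)S(t)$ (legitimate since $|h(y)|<1$ and $0<S(t)<1$) it gives $\int_{\mathbb{R}}\log(1-h(y)S(t))\nu(dy)\le -S(t)\int h\,d\nu$. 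Since the hypothesis $\int h\,d\nu<0$ forces $\beta-\int h\,d\nu>0$, combining with $S(t)\le 1$ and the nonpositivity of $-\tfrac12\sigma^2 S^2(t)$ bounds the drift by $(\beta-\int h\,d\nu)-(\lambda+\mu)=-c$, where $c:=\mu+\lambda+\int h\,d\nu-\beta>0$ by the standing hypothesis.

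Integrating then gives $\log I(t)\le \log I(0)-ct+M(t)$, where $M(t)$ is the sum of the Brownian integral and the compensated Poisson integral, hence a local martingale. To finish, I would invoke the strong law of large numbers for local martingales (Liptser--Shiryaev): the Brownian piece has predictable quadratic variation bounded by $\sigma^2 t$, and, under the (implicit) assumption already used in Proposition 1.1 that $h$ is bounded away from $\pm 1$, the jump piece has predictable quadratic variation $\int_0^t\int_{\mathbb{R}}\log^2(1-h(y)S(u))\nu(dy)\,du\le C t$ since $\nu(\mathbb{R})<\infty$. Consequently $M(t)/t\to 0$ almost surely, so $\limsup_{t\to\infty}\log I(t)/t\le -c<0$, forcing $I(t)\to 0$ almost surely. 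The main technical difficulty is precisely this martingale SLLN step: uniform control of $\log^2(1-h(y)S)$ is essential, and if $h$ is only pointwise strictly between $-1$ and $1$ without a uniform gap, one would have to localize via the stopping times $\tau_\epsilon$ from Claim 2.1 and pass to the limit by dominated convergence, which is considerably more delicate.
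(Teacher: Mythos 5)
Your proposal is correct in outline, but it takes a genuinely different route from the paper. The paper applies the stochastic Lyapunov method to the linear function $g(x)=1-x$ (i.e.\ to $I$ itself): it computes $L_0g(x)\le -\big(-\beta+\mu+\lambda+\int_{\mathbb{R}}h(y)\nu(dy)\big)(1-x)$, invokes Theorem 4 of Gihman--Skorohod to get almost-sure convergence to $(1,0)$ with probability at least $1-\epsilon$ for initial points in some neighborhood $U$ of the equilibrium, and then upgrades this to probability one from an arbitrary $\mathbf{x}\in\Delta_2$ using Claim 2.1 ($E_x[\tau_\epsilon]<\infty$, so the process reaches $U$ almost surely) together with the strong Markov property. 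You instead work with $\log I(t)$, absorb the Poisson integral into the drift via the compensator, bound the drift by the \emph{same} constant $-\big(\mu+\lambda+\int h\,d\nu-\beta\big)$ using $\log(1-u)\le -u$, and conclude via the strong law of large numbers for local martingales. Your route is the one standard in the more recent stochastic-SIS literature; it is global in one step (no hitting-time lemma, no external local-stability theorem) and yields the stronger conclusion $\limsup_t t^{-1}\log I(t)\le -c<0$, i.e.\ an explicit almost-sure exponential extinction rate. Its price is exactly the issue you flag: the SLLN step needs $\sup_t\langle M\rangle_t/t<\infty$, hence a uniform bound on $\log^2\big(1-h(y)S\big)$, which requires $h$ to be uniformly bounded away from $1$ (note $1-h(y)S\to 0$ is possible precisely in the regime $S\to 1$ you are proving occurs). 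This is not supplied by the stated hypothesis $-1<h(y)<1$, though the paper itself tacitly assumes something of this kind in Proposition 1.1 when it writes $\min h(y)$; under that implicit uniform gap your argument closes, and the two proofs rest on the same drift inequality while differing entirely in how they globalize it.
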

\begin{proof}
In our proof we will employ the stochastic Lyapunov method. We will focus on $S(t)$, defined by Equation(5). Define $g(x)=1-x$ as our Lyapunov function.  Taking $L_0$ as the infinitesimal generator for $S(t)$, we see that
\begin{equation*}\begin{split}
L_0 g(x) & = \Big(- \beta x \big( 1-x \big) + \mu \big(1-x \big) + \lambda \big( 1-x \big)  \Big)(-1)  + \int_{ \mathbb{R} } \bigg[ \Big\{1 - \Big( x + h(y) x (1-x) \Big) \Big\} - (1-x)  \bigg] \nu(dy)
\\ & = - \bigg( - \beta x + \mu + \lambda +  \int_{ \mathbb{R} } h(y)\nu(dy) x \bigg)\big(1-x \big)
\\ & \leq  - \bigg( - \beta  + \mu + \lambda +  \int_{ \mathbb{R} } h(y)\nu(dy) \bigg)\big( 1-x \big).
\end{split}\end{equation*}
Therefore, Theorem 4 in \cite{GS72} (page 325) tells us that for an $\epsilon>0$, there exists a neighborhood of $(1,0)$, say $U$, such that 
$$
P_{ \mathbf{x}  } \bigg( \lim_{t \to \infty} K(t) = (1,0) \bigg) \geq 1 -\epsilon.
$$
for $\mathbf{x} \in U \cap \Delta_2$.

Now, take an arbitrary $\epsilon>0$ and $\mathbf{x} \in \Delta_2$, and define $M=\Big\{  \lim_{t \to \infty} K(t)  = (1,0) \Big\} $. The strong Markov property tells us that
$$
P_{ \mathbf{x} } \big( M \big) = E_{ \mathbf{x} } \Big[ E_{ K( \tau_{\epsilon} ) }  \big[\chi_{M} \big] \Big] \geq 1 -\epsilon.
$$
Since $\epsilon$ was arbitrary, the theorem follows. 
\end{proof}

\bigskip
\bigskip
\begin{re}
Theorem 4 in \cite{GS72} is stated for a jump-diffusion with a compensated Poisson measure. However, since we assumed that $\nu \big( \mathbb{R} \big)<\infty$, we may rewrite Equation (1) as 
\begin{equation*}\begin{split}
dS(t) & = \bigg(-\beta S(t) I(t) -\mu S(t) + \mu + \lambda I(t) + \int_{ \mathbb{R} } h(y) S(t-) I(t-)\nu(dy) \bigg)dt -\sigma  S(t) I(t) dW(t)  + \int_{ \mathbb{R} } h(y) S(t-) I(t-) \tilde{N}(dt,dy), 
\\ dI(t) & = \bigg( \beta S(t) I(t) - (\lambda +\mu) I(t) -  \int_{ \mathbb{R} } h(y) S(t-) I(t-)\nu(dy) \bigg)dt + \sigma S(t) I(t) dW(t)  - \int_{ \mathbb{R} } h(y) S(t-) I(t-) \tilde{N}(dt,dy),
\end{split}\end{equation*}
and thus we may apply this theorem. The generator remains unchanged.
\end{re}

\bigskip
\bigskip
\begin{cor}
Suppose that $h(y)$ nonnegative, and there exists $0<\varphi<1$ such that $\beta < \mu + \lambda + \varphi \int_{ \mathbb{R} } h(y) \nu(dy)$. Then
$$
P_{ \mathbf{x}  } \bigg( \lim_{t \to \infty} K(t) = (1,0) \bigg)=1.
$$
\end{cor}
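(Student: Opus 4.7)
The plan is to repeat the proof of Theorem 2.1 nearly verbatim, using the same scalar Lyapunov function $g(x)=1-x$ applied to $S(t)$. The computation of the generator does not depend on the sign of $h$, so exactly as in Theorem 2.1 one obtains
$$L_{0}g(x) \;=\; (1-x)\Big[\big(\beta - \textstyle\int_{\mathbb{R}} h(y)\,\nu(dy)\big)\,x \;-\; (\mu+\lambda)\Big].$$
The only step in Theorem 2.1 that genuinely used the sign hypothesis $\int h\,\nu < 0$ was the subsequent bound on this bracket, and that is precisely what I need to rework.

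I would split into two cases according to the sign of $\beta - \int h\,\nu$. If $\beta \leq \int h\,\nu$, the first term of the bracket is nonpositive, so $L_{0}g(x) \leq -(\mu+\lambda)(1-x)$. If $\beta > \int h\,\nu$, bounding $x \leq 1$ gives $L_{0}g(x) \leq \big(\beta - \int h\,\nu - (\mu+\lambda)\big)(1-x)$, and the hypothesis $\beta < \mu+\lambda+\varphi\int h\,\nu$ rewrites as $\beta - \int h\,\nu - (\mu+\lambda) < (\varphi-1)\int h\,\nu$. Since $\varphi < 1$ and $h \geq 0$ forces $\int h\,\nu \geq 0$, the right-hand side is nonpositive, and strictly negative as soon as $\int h\,\nu > 0$; in the degenerate case $\int h\,\nu = 0$ the hypothesis reduces to $\beta < \mu+\lambda$ and the same estimate still yields a strictly negative coefficient. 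Thus in every case there is some $\kappa > 0$ with $L_{0}g(x) \leq -\kappa\,g(x)$ on $[0,1]$.

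With this drift estimate in hand, the remainder of the proof is a direct transcription of the closing paragraphs of Theorem 2.1: Theorem 4 of \cite{GS72} (applied to the compensated-measure reformulation noted in Remark 2.1) yields, for each $\epsilon > 0$, a neighborhood $U$ of $(1,0)$ on which $P_{\mathbf{x}}\big(\lim_{t\to\infty} K(t) = (1,0)\big) \geq 1-\epsilon$ for $\mathbf{x} \in U \cap \Delta_2$; Claim 2.1 combined with the strong Markov property at $\tau_{\epsilon}$ then extends the bound to all $\mathbf{x} \in \Delta_2$, and letting $\epsilon \downarrow 0$ gives the conclusion. The only real obstacle is the case analysis in the drift bound: the role of the parameter $\varphi < 1$ is precisely to guarantee the strict inequality needed in the subcase $\beta > \int h\,\nu$, once the sign of $\int h\,\nu$ has flipped relative to the setting of Theorem 2.1.
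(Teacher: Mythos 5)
Your proof is correct, and its overall architecture (the Lyapunov function $g(x)=1-x$, Theorem 4 of \cite{GS72} via the compensated-measure reformulation, then Claim 2.1 and the strong Markov property to globalize) is exactly the paper's. The one genuine difference is where the parameter $\varphi$ enters. The paper uses it geometrically: it restricts attention to the neighborhood $U=\{\mathbf{x}\in\Delta_2 : x_1>\varphi\}$ and bounds $\int h\,\nu\cdot x \geq \varphi\int h\,\nu$ there (using $h\geq 0$), obtaining $L_0 g(x)\leq -\big(-\beta+\mu+\lambda+\varphi\int h\,\nu\big)(1-x)$ only on $U$ --- which is all Theorem 4 of \cite{GS72} requires, since that theorem is local. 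You instead use $\varphi$ purely algebraically, splitting on the sign of $\beta-\int h\,\nu$ and checking that the linear function $x\mapsto(\beta-\int h\,\nu)x-(\mu+\lambda)$ is bounded above by a strictly negative constant at both endpoints of $[0,1]$; this yields the drift inequality $L_0 g\leq-\kappa g$ on the whole interval rather than just near the equilibrium. Your version is slightly stronger (a global supermartingale-type estimate) and arguably cleaner, at the cost of a two-case analysis; the paper's is shorter because it only ever needs the local bound. Both are valid, and both still rely on the paper's Remark about positive jumps to justify applying the Gihman--Skorohod result to a process with jumps.
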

\begin{proof}
Taking a neighborhood $U= \big\{ \mathbf{x} \in \Delta_2 : x_1 > \varphi \big\}$ and $g(x)$ as above, we have that 
$$
L_0 g(x) \leq - \bigg( - \beta  + \mu + \lambda +  \varphi \int_{ \mathbb{R} } h(y)\nu(dy) \bigg)\big( 1-x \big).
$$
Theorem 4 in \cite{GS72} and Remark 2 gives us, for $\mathbf{x} \in U$,
$$
P_{ \mathbf{x}  } \bigg( \lim_{t \to \infty} K(t) = (1,0) \bigg) \geq 1 -\epsilon_0,
$$
for some $\epsilon_0$. The rest of the proof follows as above
\end{proof}

\bigskip
\bigskip
\begin{re}
The Remark 2 \cite{GS72} holds for continuous processes. However, since the jumps are positive, this only helps the convergence of the sample paths to $(1,0)$. Therefore, we are able to use the result.
\end{re}

\bigskip
\bigskip
The computer simulations below agree with the conclusion of the theorems. Figure 1 shows for both cases that the disease free equilibrium is globally asymptotically stable. Figure 2 tell us that the process is recurrent and thus simulates an epidemic. 

%Transience
\begin{figure}[h!]
\begin{center}
\subfigure[The negative integral case.]{ \includegraphics{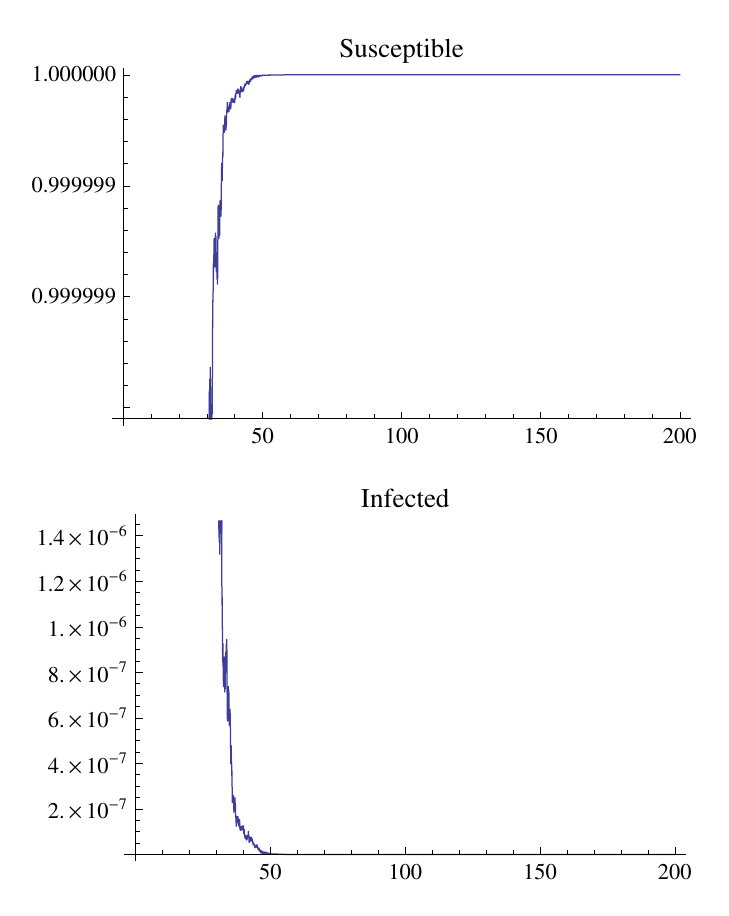} }
\subfigure[The positive jump function case.]{ \includegraphics{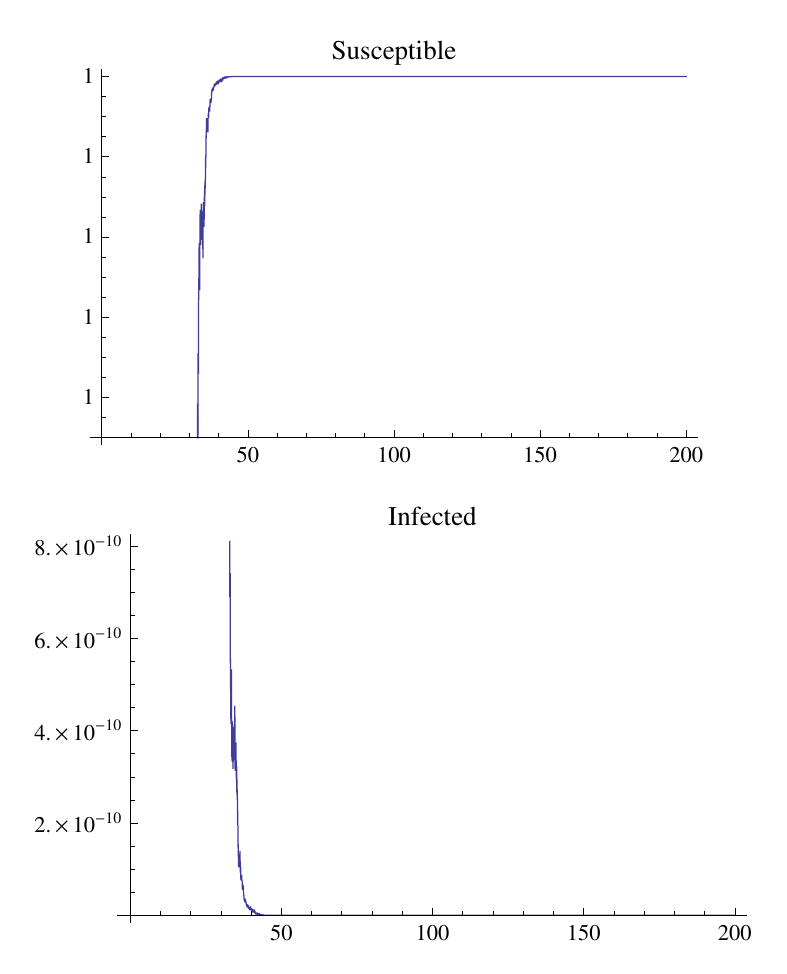} \label{fig:sec}  }
\end{center}
\caption{ For simulations, the initial condition are $S_0=.6$ and $I_0=.4$.  The parameter values for Figure 1(a) are $\beta=.1$, $\sigma=.3$, $\lambda=.3$,  $\mu=.2$, $\nu\big(\mathbb{R} \big)=1$,  and $j(y) \equiv -.01$, while the parameter values for Figure 1(b) are $\beta=.4$, $\sigma=.3$, $\lambda=.3$,  $\mu=.1$, $\nu\big(\mathbb{R} \big)=.5$,  and $j(y) \equiv .1$.}
\end{figure}

%Reccuence
\begin{figure}[h!]
\begin{center}
\subfigure[The negative integral case.]{ \includegraphics{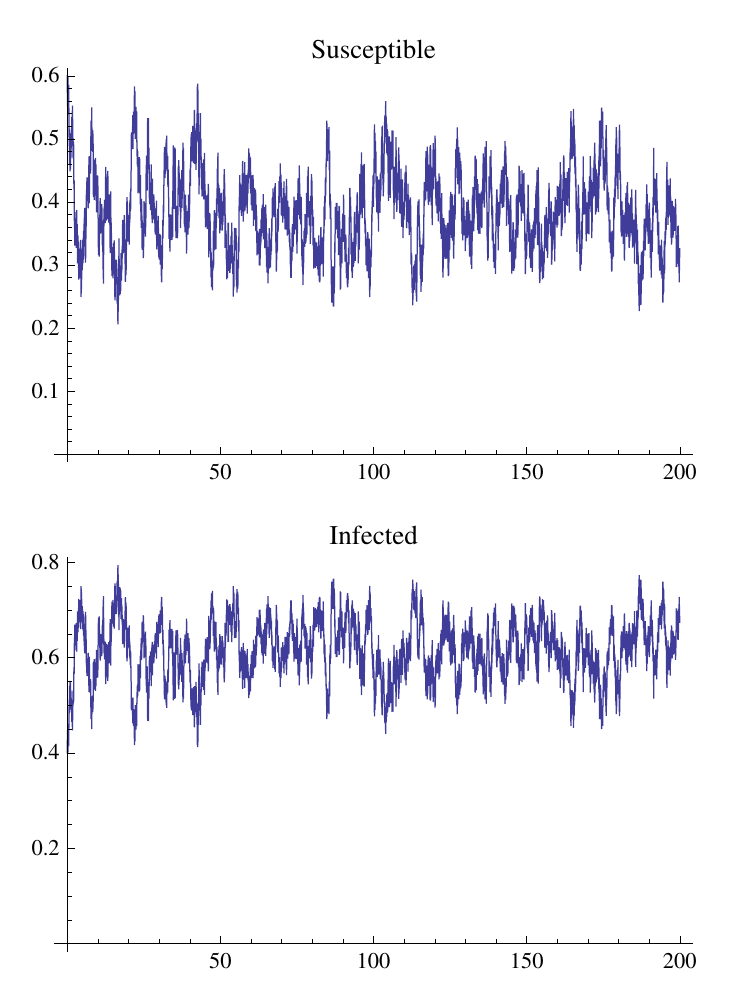} }
\subfigure[The positive jump function case.]{ \includegraphics{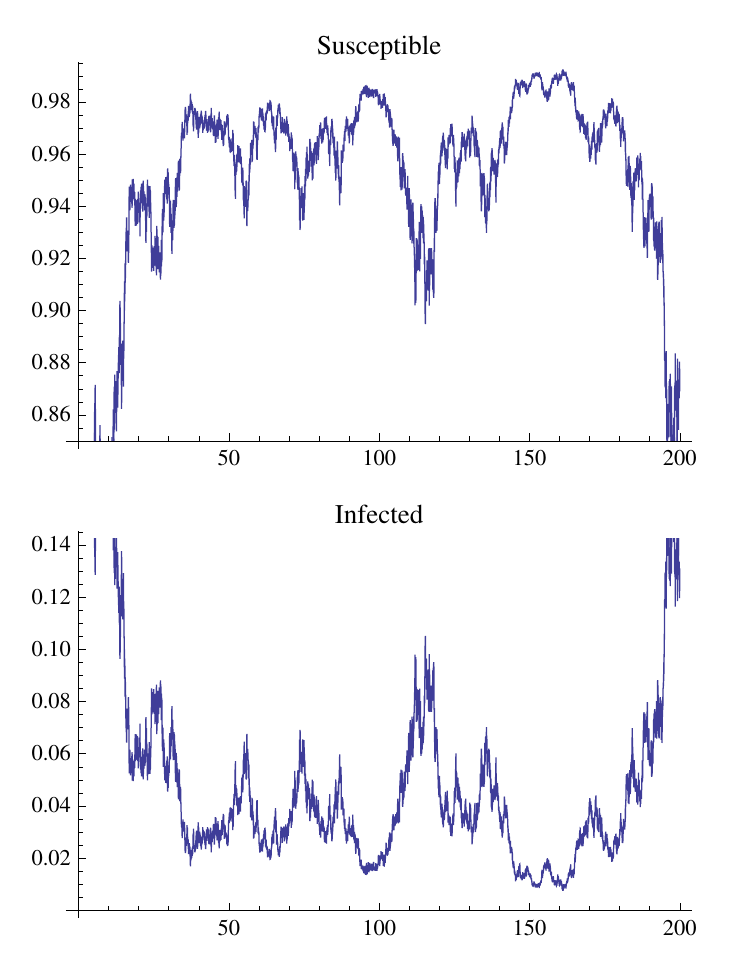} \label{fig:sec}  }
\end{center}
\caption{ Just as in Figure 1, the initial condition are $S_0=.6$ and $I_0=.4$. For Figure 2(a), the parameter values are $\beta=.4$, $\sigma=.3$, $\lambda=.3$,  $\mu=.15$, $\nu\big(\mathbb{R} \big)=1$,  and $j(y) \equiv -.1$, and for Figure 2(b), the parameter values are $\beta=.8$, $\sigma=.3$, $\lambda=.2$,  $\mu=.1$, $\nu\big(\mathbb{R} \big)=2$,  and $j(y) \equiv .1$.   }
\end{figure}

\bigskip
\bigskip
\begin{re}
If $\displaystyle \int_{ \mathbb{R} } h(y)\nu(dy)> 0$ and the function $j(y)$ was able to take both positive and negative values, we were not able to say anything about this case.  A way to calculate the stability of a system of this type is to determine if and where $S(t)$ leaves an arbitrary subinterval of $[0,1]$. So, for any $0 < x_1< x_2 <1$, define $\displaystyle \tau_{x_1x_2}(x_0)=\inf_{t \geq 0}\Big\{ S(t) \not\in(x_1,x_1) \big| S(0)=x_0 \Big\}$,  $\pi_{x_2;x_1}(x_0)=P\Big( S\big( \tau_{x_1x_2}(x_0) \big) \geq x_2 \Big)$, and $\pi_{x_1;x_2}(x_0)=P\Big( S\big( \tau_{x_1x_2}(x_0) \big) \leq x_1 \Big)$. The papers of \cite{HT76} and \cite{MA00} tell us that solving the integro-differential equation

$$
\alpha(x) u'(x) + \frac{\gamma^2(x)}{2} u''(x) + \int_{ \mathbb{R} } \bigg[ u\bigg( x + h(y) x (1-x) \bigg) - u(x) \bigg] \nu(dy) = 0,
$$
with the initial conditions of $u(x)=1$ for $x\in[x_2,1]$ and $u(x)=0$ for $x\in[0,x_1]$, will give us $\pi_{x_2;x_1}(x_0)$ (and accordingly for $\pi_{x_1;x_2}(x_0)$).  Taking $x_0 \to 0$, and $x_1\to 1$ will tell us how this system evolves. This is similar to the method given in Gihman and Skorohod \cite{GS72}.
\end{re}

%SIRS
\bigskip
\bigskip
\section{Analysis of the SIRS Model}
To analyze the SIRS model, we will use the stochastic Lyapunov method. For a bit of simplicity, we will define $\displaystyle \mathfrak{E}(t)= \big( S(t), I(t), R(t) \big)$.

\bigskip
\begin{no}
Looking closely at the process, one can see that the only stationary position is the point $e_1=(1,0,0)$. 
\end{no}

\bigskip
\bigskip

\begin{thm}
If $\displaystyle \beta < \min \bigg\{ \lambda +  \int_{ \mathbb{ R} } j(y)\nu(dy)  - \int_{ \mathbb{ R} } \frac{  j^2(y) }{ 2 } \nu(dy) -\frac{ \sigma^2 }{ 2}, \ \ \delta \bigg \}$ then 
$$
P_x \bigg( \lim_{ t \to \infty} \mathfrak{E}(t) = \mathbf{e}_1 \bigg) = 1,
$$
\end{thm}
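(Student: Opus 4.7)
The plan follows the stochastic Lyapunov method used in Theorem~2.1. Because $S(t) + I(t) + R(t) \equiv 1$, showing $\mathfrak{E}(t) \to \mathbf{e}_1$ almost surely reduces to showing $I(t) \to 0$ and $R(t) \to 0$; I would work with a joint Lyapunov function of the form $V(\mathbf{x}) = I^2 + c\, R^2$ for a suitably small constant $c > 0$.

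Applying It\^o's formula to $I^2$ along the SDE for $I$ in Equation~(4), and using $(1-j(y))^2 - 1 = j^2(y) - 2j(y)$ for each Poisson jump, yields
\[
\mathcal{L}(I^2) \;=\; I^2\Big[2\beta S - 2\lambda + \sigma^2 S^2 + \int_{\mathbb{R}}\big(j^2(y) - 2j(y)\big)\nu(dy)\Big].
\]
Bounding $S \leq 1$ and rearranging shows that the first branch of the hypothesis is exactly what forces $\mathcal{L}(I^2) \leq -2\alpha I^2$ for some $\alpha > 0$. Since $R$ has no Brownian contribution, It\^o's formula applied to $R^2$ gives
\[
\mathcal{L}(R^2) \;=\; -2\delta R^2 + 2\lambda RI + 2RI\!\int_{\mathbb{R}}\! j\,\nu(dy) + I^2\!\int_{\mathbb{R}}\! j^2\,\nu(dy),
\]
so $\mathcal{L}V$ contains the diagonal terms $-2\alpha I^2 - 2c\delta R^2$ (up to an $O(c)\,I^2$ perturbation) together with an indefinite cross term $2c(\lambda + \int j\,\nu)RI$. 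Absorbing this cross term by Young's inequality $2RI \leq \varepsilon R^2 + I^2/\varepsilon$, with $c$ small and $\varepsilon$ appropriately tuned, produces $\mathcal{L}V \leq -K V$ for some $K > 0$; the second branch $\beta < \delta$ of the hypothesis supplies the slack required for both diagonal coefficients to remain strictly negative after the absorption.

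Once $\mathcal{L}V \leq -K V$ is established, Theorem~4 of \cite{GS72}, applicable in compensated form by Remark~2.1 since $\nu(\mathbb{R}) < \infty$, delivers
\[
P_{\mathbf{x}}\Big(\lim_{t\to\infty}\mathfrak{E}(t) = \mathbf{e}_1\Big) \;\geq\; 1 - \epsilon
\]
for $\mathbf{x}$ in some neighborhood $U$ of $\mathbf{e}_1$. As in the final paragraph of Theorem~2.1, the strong Markov property applied at the entrance time $\tau_U$ into $U$ extends the conclusion to every initial point in the interior of the simplex, once $\tau_U$ is shown to be almost surely finite by a Dynkin-formula argument analogous to Claim~2.1.

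The main obstacle is the off-diagonal $RI$ term in $\mathcal{L}V$, which arises because the Poisson measure jumps $I$ and $R$ simultaneously in opposite directions: its sign is indefinite, and the two branches of the hypothesis must be used in tandem --- one to secure the $I^2$ margin and the other to preserve the $R^2$ margin --- when carrying out the Young's inequality balancing. A secondary technical point is rigorously verifying the a.s.\ finiteness of $\tau_U$, which should be a direct adaptation of Claim~2.1.
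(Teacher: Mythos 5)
Your proposal is correct and follows essentially the same route as the paper: a quadratic Lyapunov function vanishing only at $\mathbf{e}_1$ on the simplex, the drift estimate $\mathfrak{L}V \le -KV$ computed from the generator (with the jump contributions $(1-j)^2-1=j^2-2j$ for $I$ and $2RjI+j^2I^2$ for $R$), Theorem 4 of \cite{GS72} for local almost sure convergence, and a Dynkin-formula/strong-Markov argument to pass from a neighborhood of $\mathbf{e}_1$ to the whole simplex. The only difference is cosmetic: since $(S-1)^2=(I+R)^2$ on the simplex, the paper's $c_1(S-1)^2+c_2I^2+c_3R^2$ and your $I^2+cR^2$ are equivalent quadratic forms, and your Young's-inequality absorption of the cross term plays the role of the paper's explicit inequalities on $c_1,c_2,c_3$ (note that your balancing in fact works for every $\delta>0$, so the branch $\beta<\delta$ is not actually used in your version, which is harmless since it only means the stated hypothesis is more than sufficient).
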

where $\displaystyle x\in  \Delta_3$.
\begin{proof}

\bigskip
\bigskip
The proof will follow the one given for the SIS model. For $\mathfrak{L}$ the infinitesimal generator of the SIRS stochastic process, we see that 
\begin{equation*}\begin{split}
\mathfrak{L}g(x) & =\Big( -\beta x_1 x_2 + \delta x_3 \Big) \frac{\partial g}{\partial x_1}(x) + \Big( \beta x_1 x_2 - \lambda  x_2 \Big)\frac{\partial g}{\partial x_2}(x) + \Big(  \lambda x_2 - \delta x_3 \Big) \frac{\partial g}{\partial x_3}(x) + \frac{1}{2} \sigma^2 x^2_1x^2_2 \frac{\partial^2 g}{\partial x_1^2}(x)
\\ &  + \frac{1}{2} \sigma^2 x^2_1x^2_2 \frac{\partial^2 g}{\partial x_2^2}(x) - \frac{1}{2} \sigma^2 x^2_1x^2_2 \frac{\partial^2 g}{\partial x_1 \partial x_2}(x) - \frac{1}{2} \sigma^2 x^2_1x^2_2 \frac{\partial^2 g}{\partial x_2 \partial x_1 }(x) 
\\ & + \int_{ \mathbb{R} } \bigg[ g\Big( x+ \big(0,- j(y)x_2,  j(y)x_2\big) \Big) - g(x) \bigg]\nu(dy)
\end{split}\end{equation*}

Take $\kappa$ as a positive constant such that $\delta - \beta -2 \kappa>0$, (which exists by our assumption). Now, define the positive function $f(x)=c_1\big( x_1 -1 \big)^2 + c_2x_2^2 + c_3 x_3^2$, where $c_1$, $c_2$, and $c_3$ are positive, 
$$
\frac{ c_3 \bigg( \lambda +  \int_{ \mathbb{R} } j(y) \nu(dy) \bigg)  } { \delta - \beta -2 \kappa } < c_1,
$$ 
and 
$$
\frac{ c_1\Big( \frac{ \sigma^2}{2} + \beta \Big) + c_3 \int_{ \mathbb{R} } j^2(y) \nu(dy)  }{  \lambda +  \int_{ \mathbb{ R} }  j(y)\nu(dy)  - \int_{ \mathbb{ R} } \frac{  j^2(y) }{ 2 } \nu(dy)   -  \frac{ \sigma^2 }{ 2}  - \beta  } < c_2 .
$$ 
Then
\begin{equation*}\begin{split}
\mathfrak{L}f(x) & =\Big( -\beta x_1 x_2 + \delta x_3 \Big) 2c_3 \big( x_1 - 1 \big) + \Big( \beta x_1 x_2 - \lambda x_2 \Big)c_2 x_2 + \Big( \lambda x_2 - \delta x_3 \Big) c_3x_3 + \frac{1}{2} \sigma^2 x^2_1x^2_2 2c_1 +  \frac{1}{2}  \sigma^2 x^2_1x^2_2 2c_2
\\  & \ \ \ \  + \int_{ \mathbb{R} } \bigg[ c_2\Big( x_2 - j(y)x_2 \Big)^2 - c_2x_2^2 \bigg]\nu(dy) + \int_{ \mathbb{R} } \bigg[c_3 \Big( x_3 +  j(y)x_2 \Big) - c_3 x_3^2 \bigg]\nu(dy)
\\ & = \bigg\{ c_1\sigma^2 x_1^2 x_2^2 -2c_2   x_2^2 + c_2 \int_{ \mathbb{ R} } \Big[ -2  j(y) +  j^2(y) \Big] \nu(dy) x_2^2  + c_2 \sigma^2 x_1^2 x_2^2 +2 c_2 \beta x_1x_2^2 + c_3\int_{ \mathbb{ R} } j^2(y) \nu(dy) x_2^2  \bigg\}
\\  & + \bigg\{ -2c_1\delta x_3 + 2c_1 \delta x_1 x_3 + c_3 \lambda x_2x_3 + 2c_3\int_{ \mathbb{ R} } j(y) \nu(dy)x_2 x_3 -2c_3 \delta x_3^2  \bigg\} + \bigg\{ -2c_1 \beta x_2 x_1^2 + 2c_1\beta x_2 x_1 \bigg\}.
\end{split}\end{equation*}
Organizing and simplifying yields
\begin{equation*}\begin{split}
\mathfrak{L}f(x) & = 2\bigg\{ c_1 \frac{ \sigma^2 }{ 2 } x_1^2  - c_2 \bigg( \lambda +  \int_{ \mathbb{ R} }  j(y)\nu(dy)  - \int_{ \mathbb{ R} } \frac{  j^2(y) }{ 2 } \nu(dy)   -  \frac{ \sigma^2 }{ 2}  x_1^2 - \beta x_1 \bigg)+ c_3\int_{ \mathbb{ R} } j^2(y) \nu(dy)\bigg\} x_2^2  
\\  & + 2\bigg\{ c_1\delta\Big(-1 + x_1 \Big)  + c_3 \bigg( \lambda x_2+ \int_{ \mathbb{ R} } j(y) \nu(dy)x_2  - \delta x_3 \bigg)  \bigg\} x_3 + 2c_1\beta \bigg\{ - x_1 + 1  \bigg\} x_2 x_1
\\ & = 2\bigg\{ c_1 \frac{ \sigma^2 }{ 2 } x_1^2  - c_2 \bigg( \lambda +  \int_{ \mathbb{ R} } j(y)\nu(dy)  - \int_{ \mathbb{ R} } \frac{  j^2(y) }{ 2 } \nu(dy)   -  \frac{ \sigma^2 }{ 2}  x_1^2 - \beta x_1 \bigg)+ c_3\int_{ \mathbb{ R} } j^2(y) \nu(dy)\bigg\} x_2^2  
\\  & + 2\bigg\{ c_1\delta\Big(-x_2 - x_3 \Big)  + c_3 \bigg( \lambda x_2+ \int_{ \mathbb{ R} }  j(y) \nu(dy)x_2  - \delta x_3 \bigg)  \bigg\} x_3 + 2c_1\beta \bigg\{ x_2 + x_3 \bigg\} x_2 x_1
\\ & = 2\bigg\{ c_1 \bigg( \frac{ \sigma^2 }{ 2 } x_1^2 + \beta x_1\Bigg) - c_2 \bigg( \lambda +  \int_{ \mathbb{ R} }  j(y)\nu(dy)  - \int_{ \mathbb{ R} } \frac{ j^2(y) }{ 2 } \nu(dy)   -  \frac{ \sigma^2 }{ 2}  x_1^2 - \beta x_1 \bigg)+ c_3\int_{ \mathbb{ R} } j^2(y) \nu(dy)\bigg\} x_2^2  
\\  & + 2\bigg\{ \bigg( -c_1 \Big( \delta - \beta x_1x_2 \Big)  + c_3 \bigg[ \lambda +  \int_{ \mathbb{ R} } j(y) \nu(dy) \bigg]  \bigg) x_2  - (c_1+c_3 ) \delta x_3 \bigg)  \bigg\} x_3.
\end{split}\end{equation*}
Increasing the values of the positive numbers, we see that 
\begin{equation*}\begin{split}
\mathfrak{L}f(x) & \leq 2\bigg\{ c_1 \bigg( \frac{ \sigma^2 }{ 2 } + \beta \Bigg) - c_2 \bigg( \lambda +  \int_{ \mathbb{ R} }  j(y)\nu(dy)  - \int_{ \mathbb{ R} } \frac{ j^2(y) }{ 2 } \nu(dy)   -  \frac{ \sigma^2 }{ 2}  - \beta  \bigg)+ c_3 \int_{ \mathbb{ R} } j^2(y) \nu(dy)  \bigg\} x_2^2  
\\  & + 2\bigg\{ \bigg( -c_1 \Big( \delta - \beta \Big)  + c_3 \bigg[ \lambda +  \int_{ \mathbb{ R} }  j(y) \nu(dy) \bigg]  \bigg) x_2  - (c_1+c_3 ) \delta x_3 \bigg)  \bigg\} x_3 
\\ & = 2\bigg\{ c_1 \bigg( \frac{ \sigma^2 }{ 2 } + \beta \Bigg) - c_2 \bigg( \lambda +  \int_{ \mathbb{ R} }  j(y)\nu(dy)  - \int_{ \mathbb{ R} } \frac{ j^2(y) }{ 2 } \nu(dy)   -  \frac{ \sigma^2 }{ 2}  - \beta  \bigg)+ c_3 \int_{ \mathbb{ R} } j^2(y) \nu(dy)  \bigg\} x_2^2  
\\  & + 2\bigg\{ \bigg( -c_1 \Big( \delta - \beta \Big)  + c_3 \bigg[ \lambda +  \int_{ \mathbb{ R} }  j(y) \nu(dy) \bigg]  \bigg) x_2  - \big(c_1\delta +c_3 \delta \big)  x_3 \bigg)  \bigg\} x_3  -\kappa c_1(x-1)^2 +  \kappa c_1(x-1)^2
\\ & = 2\bigg\{ c_1 \bigg( \frac{ \sigma^2 }{ 2 } + \beta + \kappa \Bigg) - c_2 \bigg( \lambda +  \int_{ \mathbb{ R} }  j(y)\nu(dy)  - \int_{ \mathbb{ R} } \frac{ j^2(y) }{ 2 } \nu(dy)   -  \frac{ \sigma^2 }{ 2}  - \beta  \bigg)+ c_3 \int_{ \mathbb{ R} } j^2(y) \nu(dy)  \bigg\} x_2^2  
\\  & + 2\bigg\{ \bigg( -c_1 \Big( \delta - \beta - 2\kappa \Big)  + c_3 \bigg[ \lambda +  \int_{ \mathbb{ R} }  j(y) \nu(dy) \bigg]  \bigg) x_2  -  \Big(c_1  \big( \delta -\kappa  \big) +c_3\delta  \Big)x_3 \bigg)  \bigg\} x_3  -\kappa c_1(x-1)^2 
\\ & \leq 2\bigg\{ c_1 \bigg( \frac{ \sigma^2 }{ 2 } + \beta + \kappa \Bigg) - c_2 \bigg( \lambda +  \int_{ \mathbb{ R} }  j(y)\nu(dy)  - \int_{ \mathbb{ R} } \frac{ j^2(y) }{ 2 } \nu(dy)   -  \frac{ \sigma^2 }{ 2}  - \beta  \bigg)+ c_3 \int_{ \mathbb{ R} } j^2(y) \nu(dy)  \bigg\} x_2^2  
\\  & - 2\Big(c_1  \big( \delta -\kappa  \big) +c_3\delta  \Big) x^2_3  -\kappa c_1(x-1)^2. 
\end{split}\end{equation*}
Therefore, there exists a positive constant $k$, such that $\displaystyle \mathfrak{L}g \leq -k g$. Theorem 4 in \cite{GS72} tells us that for an $\epsilon>0$, there exists a neighborhood of $\mathbf{e}_1$, say $U$, such that 
$$
P_{ \mathbf{x}  } \bigg( \lim_{t \to \infty} \mathfrak{E}(t) = \mathbf{e}_1  \bigg) \geq 1 -\epsilon.
$$
for $\mathbf{x} \in U \cap \Delta_2$.

\medskip
Define $\tau_{\epsilon}$ as above, and fix an arbitrary $\epsilon>0$. We will show that $\displaystyle E_{ \mathbf{x} } \Big[ \tau_{\epsilon}  \Big]<\infty$, which will assert our theorem. To adjust for the process possibly jumping over the boundary of $\tau_{\epsilon}$, we will define $1-\epsilon_0=\sup\Big\{ S\big(\tau_{\epsilon} \big) \Big\}$, (where $\epsilon_0=1$ if the set is empty). Since the process does not hit the boundary in finite time, we have that $\epsilon_0>0$ a.s. Note that, for the function $f$ above, $\displaystyle \mathfrak{L}f(x) \leq \alpha \big( 1- x \big)^2$, for some $\alpha>0$. Now for $\mathbf{x} \in \Delta_3$, such that $x_1 < 1- \epsilon$, and $0<t<\infty$, Dynkin's formula yields
\begin{equation*}\begin{split}
E_{ \mathbf{x} } \Big[ f\big( \tau_{\epsilon} \wedge t \big) \Big] & = f(x) - E_{ \mathbf{x} } \bigg[ \int_{0}^{  \tau_{\epsilon} \wedge t  } \mathfrak{L}f\Big( \mathfrak{E}(t)\Big) ds \bigg]
\\ & \leq  f(x) - E_{ \mathbf{x} } \bigg[ \int_{0}^{  \tau_{\epsilon} \wedge t  } \Big( 1- S(t)\Big)^2 ds \bigg]
\\ & \leq  f(x) - \alpha \epsilon^2_0 E_{ \mathbf{x} } \Big[ \tau_{\epsilon} \wedge t  \Big].
\end{split}\end{equation*}
Thus $\displaystyle E_{ \mathbf{x} } \Big[ \tau_{\epsilon}  \Big]<\infty$, and therefore, following the proof of Theorem 1,
$$
P_x \bigg( \lim_{ t \to \infty} \mathfrak{E}(t) = \mathbf{e}_1 \bigg) = 1.
$$

\end{proof}

\bigskip
\bigskip
The simulations given below show globably asymptotic stability to the disease free equilibrium and an epidemic.

\newpage

\begin{figure}[!] 
\centering
\subfigure[Convergence to the disease free equilibrium.]{ \includegraphics{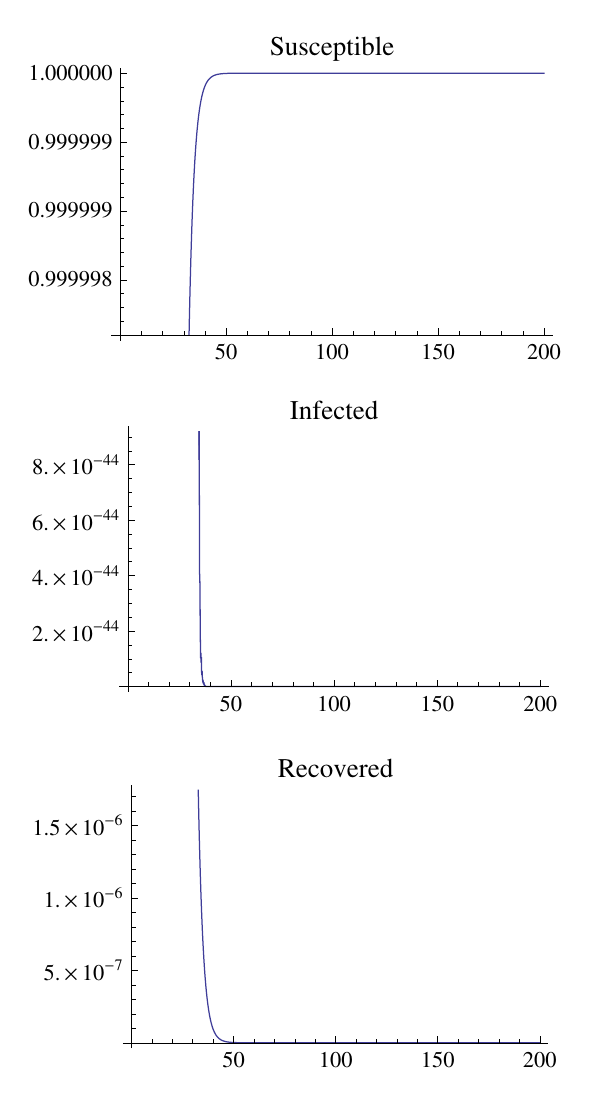} }
\subfigure[An example of an epidemic.]{ \includegraphics{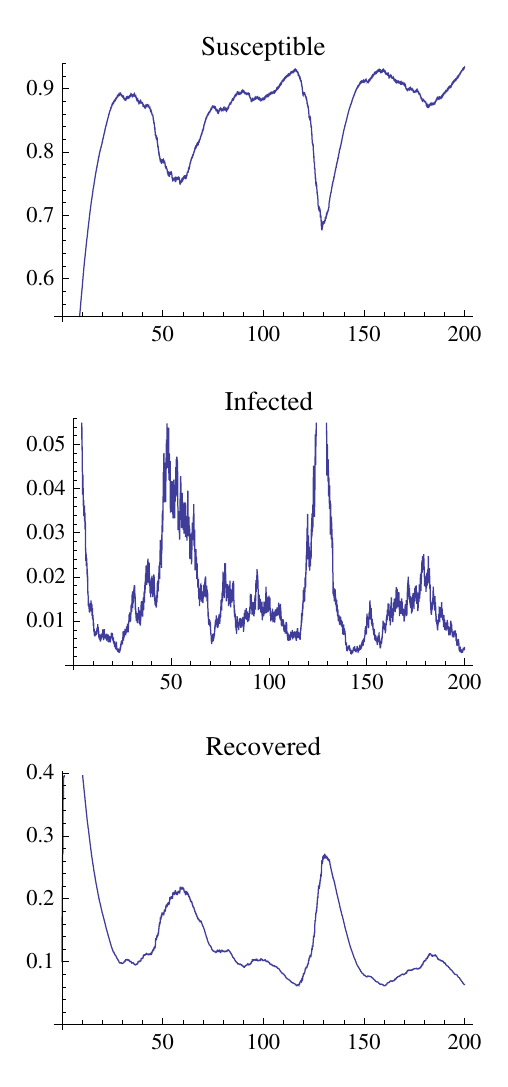} }
\caption{ For both simulations, the initial condition are $S_0=.3$, $I_0=.6$, and $R_0=.1$.  For the left hand side the parameter values $\beta=.3$, $\sigma=.1$, $\lambda=.29$,  $\delta=.4$, $\nu\big(\mathbb{R} \big)=1$,  and $j(y) \equiv .3$ were used. While the parameter values of $\beta=.8$, $\sigma=.2$, $\lambda=.1$,  $\delta=.1$, $\nu\big(\mathbb{R} \big)=.5$,  and $j(y) \equiv .1$ were used. }
\end{figure}

\begin{acknowledgement*}
The author would like to thank Professor Troy Day for wonderful guidance.
\end{acknowledgement*}

\bigskip
\bibliography{epi}                                                                                                                                               
\bibliographystyle{plain}                                                                                                                                                                              
\nocite{*}

\end{document}